\newtheorem{definition}{Definition}[section]
\newtheorem{thm}[definition]{Theorem}
\newtheorem{prop}[definition]{Proposition}
\newtheorem{lem}[definition]{Lemma}
\newtheorem{ex}[definition]{Example}
\newcommand{\bz}{\mathbb Z}
\newcommand{\br}{\mathbb R}
\newcommand{\bc}{\mathbb C}
\newcommand{\calC}{\mathcal{C}}
\newcommand{\calS}{\mathcal{S}}
\providecommand{\keywords}[1]
{
  \small	
  \textbf{\textit{Keywords---}} #1
}
\title{A Short Proof for the Polynomiality\\ of the Stretched Littlewood-Richardson Coefficients}
\author{Warut Thawinrak\thanks{Department of Mathematics, University of California Davis, \texttt{wthawinrak@ucdavis.edu}}}
\date{}
\begin{document}

\maketitle
\thispagestyle{empty}

\begin{abstract}
    The stretched Littlewood-Richardson coefficient $c^{t\nu}_{t\lambda,t\mu}$ was conjectured by King, Tollu, and Toumazet to be a polynomial function in $t.$ It was shown to be true by Derksen and Weyman using semi-invariants of quivers. Later, Rassart used Steinberg's formula, the hive conditions, and the Kostant partition function to show a stronger result that $c^{\nu}_{\lambda,\mu}$ is indeed a polynomial in variables $\nu, \lambda, \mu$ provided they lie in certain polyhedral cones. Motivated by Rassart's approach, we give a short alternative proof of the polynomiality of $c^{t\nu}_{t\lambda,t\mu}$ using Steinberg's formula and a simple argument about the chamber complex of the Kostant partition function.
\end{abstract}

\keywords{Littlewood-Richardson, polynomiality, Steinberg's formula}

\section{Introduction}

The Littlewood-Richardson coefficients appear in many areas of mathematics \cite{fulton2000eigenvalues, littlewood1934group, macdonald1998symmetric, sagan2013symmetric, sundaramrep1990}. An example comes from the study of symmetric functions. The set of Schur functions $s_\lambda$, indexed by partitions $\lambda$, is a linear basis for the ring of symmetric functions. Thus, for any partitions $\lambda$ and $\mu$, the product of Schur functions $s_\lambda$ and $s_\mu$ can be uniquely expressed as
\begin{align}\label{eq:1}
    s_\lambda\cdot s_\mu = \sum_{\nu: |\nu| = |\lambda| + |\mu| } c^\nu_{\lambda,\mu} s_\nu
\end{align}
for some real numbers $c^\nu_{\lambda,\mu}$, where $|\lambda|$ denotes the sum of the parts of $\lambda$. The coefficient  $c^\nu_{\lambda,\mu}$ of $s_\nu$ in (\ref{eq:1}) is called the \textit{Littlewood-Richardson coefficient}.

There are several ways to compute $c^\nu_{\lambda,\mu}$ such as the Littlewood-Richardson rule \cite{Stanley1999}, the Littlewood-Richardson triangles \cite{PakVallejo2005}, the Berenstein-Zelevinsky triangles \cite{BZ1992}, and the honeycombs \cite{KnutsonTao1999}. In this paper, we employ the hive model that was first introduced by Knutson and Tao \cite{KnutsonTao1999}. The hive model imposes certain inequalities that allow us to compute $c^\nu_{\lambda,\mu}$ as the number of integer points in a rational polytope, which we call a hive polytope. 

For fixed partitions $\lambda, \mu, \nu$ such that $|\nu| = |\lambda| + |\mu|$, we define the \textit{stretched} Littlewood-Richardson coefficients to be the function $c^{t\nu}_{t\lambda,t\mu}$ for non-negative integers $t.$ The hive model implies that
\[c^{t\nu}_{t\lambda,t\mu} = \text{ the number of integer points in the } t^\mathrm{th}\text{-dilation of the hive polytope}.\]
By Ehrhart theory (see Thoerem \ref{Ehrhar}), $c^{t\nu}_{t\lambda,t\mu}$ is a quasi-polynolmial in $t \in \bz$, which means $c^{t\nu}_{t\lambda,t\mu}$ is a function of the form $a_d(t)t^d + \cdots + a_1(t)t + a_0(t)$ where each of $a_d(t), \dots, a_0(t)$ is a periodic function in $t$ with an integral period. The function $c^{t\nu}_{t\lambda,t\mu}$ was, however, observed and conjectured by King, Tollu, and Toumazet \cite{KingTolluTaumazet2003} to be a polynomial function in $t$ (as opposed to a quasi-polynomial). The conjecture was then shown to be true by Derksen-Weyman \cite{DerksenWeywan2002}, and Rassart \cite{Rassart2004}. More precisely, they proved the following theorem.   

\begin{thm}\label{main}
Let $\mu, \lambda, \nu$ be partitions with at most $k$ part such that $|\nu| = |\lambda| + |\mu|.$ Then
$c^{t\nu}_{t\lambda,t\mu}$ is a polynomial in $t$ of degree at most $\binom{k-1}{2}.$
\end{thm}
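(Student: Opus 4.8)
The plan is to reduce the claim to the asymptotics of Steinberg's formula and then to promote an ``eventually polynomial'' statement to a genuine polynomial one by means of Theorem~\ref{Ehrhar}. Steinberg's formula writes the coefficient as an alternating sum over $W\times W$, with $W=S_k$ the Weyl group of type $A_{k-1}$, of values of the Kostant partition function $K$:
\begin{equation*}
C^\nu_{\lambda,\mu}=\sum_{(\sigma,\tau)\in W\times W}(-1)^{\ell(\sigma)+\ell(\tau)}\,K\!\left(\sigma(\lambda+\rho)+\tau(\mu+\rho)-\nu-2\rho\right),
\end{equation*}
where $\rho$ is the half-sum of the positive roots and $\ell$ the length. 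Substituting $(t\lambda,t\mu,t\nu)$, the argument of $K$ in the $(\sigma,\tau)$-summand becomes the affine-linear function $t\,v_{\sigma,\tau}+c_{\sigma,\tau}$ of $t$, with fixed direction $v_{\sigma,\tau}=\sigma\lambda+\tau\mu-\nu$ and fixed shift $c_{\sigma,\tau}=\sigma\rho+\tau\rho-2\rho$.

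First I would record the two facts I need about $K$ in type $A_{k-1}$: its domain carries a finite \emph{chamber complex}, a polyhedral fan on the relative interior of each cell of which $K$ agrees with a single polynomial---not merely a quasi-polynomial---and the degree of every such polynomial is at most $\binom{k}{2}-(k-1)=\binom{k-1}{2}$. This genuine polynomiality is exactly the manifestation of the unimodularity of the positive roots $e_i-e_j$, while the degree bound is the dimension $\binom{k}{2}-(k-1)$ of a generic fiber of the map expressing a weight as a nonnegative combination of positive roots.

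The core step is then elementary. Each cell of the chamber complex is carved out by finitely many linear forms, and there are finitely many cells; evaluating any such form along the ray $t\mapsto t\,v_{\sigma,\tau}+c_{\sigma,\tau}$ yields an affine function of $t$, whose sign therefore stabilizes for $t$ large. Consequently there is a threshold $T$ beyond which, for every pair $(\sigma,\tau)$, the point $t\,v_{\sigma,\tau}+c_{\sigma,\tau}$ lies in the relative interior of one fixed cell---even when $v_{\sigma,\tau}$ sits on a wall, the ray cannot oscillate across it. On that cell $K$ coincides with a polynomial, so each summand becomes a polynomial in $t$ of degree at most $\binom{k-1}{2}$, and hence so does the finite sum: $C^{t\nu}_{t\lambda,t\mu}=p(t)$ for all $t\ge T$ and some fixed polynomial $p$ of degree at most $\binom{k-1}{2}$.

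Finally I would upgrade this to all $t$. By Theorem~\ref{Ehrhar} the stretched coefficient is a quasi-polynomial $q(t)$; writing $q$ via its constituent polynomials on the residue classes modulo its period, the equality $q(t)=p(t)$ for all large $t$ forces each constituent to agree with $p$ at infinitely many arguments, hence to equal $p$, so $q\equiv p$ and $C^{t\nu}_{t\lambda,t\mu}=p(t)$ for every $t\ge 0$. The crux of the whole argument---and the step I expect to demand the most care---is the input on $K$: establishing \emph{genuine} (period-one) piecewise polynomiality together with the sharp degree bound $\binom{k-1}{2}$, since it is precisely the special unimodular structure of the type $A$ root system that prevents the stretched coefficient from being a mere quasi-polynomial; the remaining steps, by contrast, are soft.
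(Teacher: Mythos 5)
Your route is essentially the paper's: Steinberg's formula converts $C^{t\nu}_{t\lambda,t\mu}$ into a finite alternating sum of values $K(tv_{\sigma,\tau}+c_{\sigma,\tau})$; unimodularity of the type $A$ positive roots (the paper's Lemma \ref{Sturmfels}, quoted from Sturmfels) gives genuine, period-one polynomiality of degree at most $\binom{k-1}{2}$ on each chamber; stabilization of the signs of finitely many affine functions of $t$ localizes each ray for large $t$; and Ehrhart quasi-polynomiality from the hive model promotes ``polynomial for $t\ge T$'' to ``polynomial for all $t$.'' (Your final step is actually spelled out more carefully than in the paper, which compresses it into ``it suffices to show that there exists an integer $N$\dots''.)

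However, one step is misstated, and as written it fails for many -- typically most -- pairs $(\sigma,\tau)$. You assert that beyond a threshold every point $tv_{\sigma,\tau}+c_{\sigma,\tau}$ lies in the relative interior of one fixed cell of the chamber complex. But the chamber complex subdivides only $\mathrm{cone}(\Delta_+)$: whenever the direction $v_{\sigma,\tau}=\sigma\lambda+\tau\mu-\nu$ has a negative partial sum $v_1+\cdots+v_i<0$, the ray eventually leaves $\mathrm{cone}(\Delta_+)$ for good (cf.\ Lemma \ref{characterization}), and then the point lies in no cell at all, so ``the polynomial attached to its cell'' is undefined for exactly those summands. The repair uses only your own mechanism: include the facet inequalities of $\mathrm{cone}(\Delta_+)$ among the finitely many affine forms whose signs stabilize, and conclude the dichotomy that for large $t$ each ray either settles in one fixed cell, where $K$ agrees with a chamber polynomial, or stays outside $\mathrm{cone}(\Delta_+)$, where $K$ vanishes identically and the summand is the zero polynomial. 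This is precisely the explicit case distinction (1)/(2) in the paper's proof of Theorem \ref{main}; with it inserted, your argument is complete and coincides with the paper's.
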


The proof by Derksen and Weyman \cite{DerksenWeywan2002} makes use of semi-invariants of quivers. They proved a result on the structure of a ring of quivers and then derived the polynomiality of $c^{t\nu}_{t\lambda,t\mu}$ as a special case.  Later, Rassart \cite{Rassart2004} proved a stronger result, which gives Theorem \ref{main} as an easy consequence, by showing that $c^{\nu}_{\lambda,\mu}$ is a polynomial in variables $\lambda, \mu, \nu$ provided that they lie in certain polyhedral cones of a chamber complex. The proof by Rassart employs Steinberg's formula, the hive conditions, and the Kostant partition function to give the chamber complex of cones in which $c^{\nu}_{\lambda,\mu}$ is a polynomial in variables $\lambda, \mu, \nu$. A considerably large portion of Rassart's paper was devoted to describing this chamber complex and showing its desired property, resulting in a fairly long justification. We note that although this chamber complex of cones was provided, it is in practice computationally hard to work out the cones.

Inspired by Rassart's approach, we ask if similar tools can be utilized to give a simple proof of Theorem \ref{main} directly. We found that Steinberg's formula and a simple argument about the chamber complex of the Kostant partition function are indeed sufficient. The main objective of this paper is to give a short alternative proof of Theorem \ref{main} using this idea. 

\section{Preliminaries}

We begin this section by presenting necessary notations and theories related to polytopes and then describe the hive model for computing $c^{\nu}_{\lambda,\mu}$. The hive model will help us understand the behavior of the stretched Littlewood-Richardson coefficients through a property of polytopes. We then introduce the Kostant partition functions and state Steinberg's formula and related results that will later be used for proving Theorem \ref{main}.

\subsection{Ehrhart Theory}

A \textit{polyhedron} $P$ in $\br^d$ is the solution to a finite set of linear inequalities, that is, 
\[P = \left\{(x_1, \dots, x_d) \in \br^d\,\Big\vert\, \sum^d_{j = 1}a_{ij}x_j \leq b_i \text{ for } i \in I\right\}\]
where $a_{ij} \in \br$, $b_i \in \br,$ and $I$ is a finite set of indices. A \textit{polytope} is a bounded polyhedron. We can also equivalently define a polytope in $\br^d$ as the convex hull of finitely many points in $\br^d.$ A polytope is said to be \textit{rational} if all of its vertices have rational coordinates, and is said to be \textit{integral} if all of its vertices have integral coordinates. We refer readers to \cite{Ziegler1995} for basic definitions regarding polyhedra.

For a polytope $P$ in $\br^d$ and a non-negative integer $t,$ the $t^{\mathrm{th}}$-dilation $tP$ is the set $\{tx \,|\, x \in P\}.$ We define
\[i(P,t) := |\bz^d \cap tP|\]
to be the number of integer points in the $t^\mathrm{th}$-dilation $tP.$ 

Recall that a quasi-polynomial is a function of the form $f(t) = a_d(t)t^d + \cdots a_1(t)t + a_0(t)$ where each of $a_d(t), \dots, a_0(t)$ is a periodic function in $t$ with an integral period. The \textit{period} of $f(t)$ is the least common period of $a_d(t), \dots, a_0(t)$. Clearly, a quasi-polynomial of period one is a polynomial.

For a rational polytope $P$, the least common multiple of the denominators of the coordinates of its vertices is called the \textit{denominator} of $P$.  The behavior of the function $i(P,t)$ is described by the following theorem due to Ehrhart \cite{Ehrhart1962}.

\begin{thm}[Ehrhart Theory]\label{Ehrhar}
If $P$ is a rational polytope, then $i(P,t)$ is a quasi-polynomial in $t.$ Moreover, the period of $i(P,t)$ is a divisor of the denominator of $P.$ In particular, if $P$ is an integral polytope, then $i(P,t)$ is a polynomial in $t.$
\end{thm}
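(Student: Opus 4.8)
The plan is to pass from the counting function $i(P,t)$ to its generating function and exploit the dictionary between rational generating functions and quasi-polynomials. Write $P = \conv(v_1,\dots,v_n) \subset \br^d$ with rational vertices, and let $D$ be the denominator of $P$. First I would lift $P$ to height one in $\br^{d+1}$ and form the cone $\mathrm{co}(P) = \{\lambda(x,1) : x \in P,\ \lambda \geq 0\}$. The point of this construction is that a lattice point $(m,t) \in \bz^{d+1}$ lies in $\mathrm{co}(P)$ with last coordinate $t$ exactly when $m \in \bz^d \cap tP$, so that
\[
\mathrm{Ehr}_P(x) := \sum_{t \geq 0} i(P,t)\, x^t = \sum_{(m,t) \in \mathrm{co}(P) \cap \bz^{d+1}} x^{t}.
\]
Thus it suffices to show that this integer-point generating function is a rational function whose denominator divides $(1 - x^D)^{d+1}$, and then to invoke the standard fact that a power series $\sum_{t\ge 0} f(t)x^t$ of this form has $f$ equal to a quasi-polynomial in $t$ of period dividing $D$ and degree at most $d$.

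Next I would reduce to the simplicial case. Triangulate $P$ using only its vertices $v_1,\dots,v_n$, which induces a decomposition of $\mathrm{co}(P)$ into simplicial cones, each spanned by rays through lifted vertices $(v_i,1)$. To avoid inclusion--exclusion over shared faces, I would use a half-open decomposition, writing $\mathrm{co}(P)$ as a disjoint union of half-open simplicial cones, so that the generating function of $P$ is simply the sum of the generating functions of the pieces. For a single simplicial cone generated by the primitive lattice vectors $w_i = (q_i v_i, q_i)$, where $q_i$ is the denominator of $v_i$, every lattice point factors uniquely as an element of the fundamental parallelepiped plus a nonnegative integer combination of the $w_i$; this yields the closed form
\[
\sum_{(m,t)} x^{t} = \frac{\sum_{p \in \Pi \cap \bz^{d+1}} x^{p_{d+1}}}{\prod_{i=1}^{d+1}\left(1 - x^{q_i}\right)},
\]
where $\Pi$ denotes the half-open fundamental parallelepiped (a shifted numerator handles the half-open pieces).

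Finally I would assemble the pieces. Since each $q_i$ divides $D$, each factor $1 - x^{q_i}$ divides $1 - x^D$, so placing all the simplicial contributions over a common denominator shows that $\mathrm{Ehr}_P(x)$ is a rational function whose denominator divides $(1 - x^D)^{d+1}$. The partial-fraction dictionary then gives that $i(P,t)$ agrees with a quasi-polynomial whose only possible periods arise from $D$-th roots of unity, hence of period dividing $D$, with the degree bound following from the order of the pole at $x = 1$. When $P$ is integral we have $D = 1$, the denominator becomes $(1-x)^{d+1}$, and the quasi-polynomial is an honest polynomial. I expect the main obstacle to be the geometric bookkeeping: producing a clean disjoint (half-open) simplicial decomposition of $\mathrm{co}(P)$ and tracking the denominators $1 - x^{q_i}$ carefully enough to pin the period down as a \emph{divisor} of $D$, rather than merely establishing bare quasi-polynomiality, for which the control on the parallelepiped denominators is essential.
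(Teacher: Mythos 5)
This theorem is background material in the paper: it is stated with a citation to Ehrhart's original work and is never proved there, so there is no in-paper argument to compare yours against. Judged on its own, your proposal is the standard generating-function proof (essentially Stanley's treatment) and it is sound: coning over $P$ correctly converts the dilation count into a grading of lattice points by last coordinate; triangulating and then passing to a half-open decomposition into simplicial cones is a legitimate, well-documented device for avoiding inclusion--exclusion; the fundamental-parallelepiped factorization gives the denominator $\prod_{i}\left(1-x^{q_i}\right)$; and since each $q_i$ divides $D$, the Ehrhart series is a rational function with denominator dividing $(1-x^D)^{d+1}$, which pins the period to a divisor of $D$ (and yields $D=1$, hence a genuine polynomial, in the integral case). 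Two details deserve explicit attention in a full write-up. First, the dictionary between rational functions with denominator $(1-x^D)^{d+1}$ and quasi-polynomials gives agreement for \emph{all} $t \geq 0$, not merely for large $t$, only when the numerator degree is strictly smaller than the denominator degree; you should verify this, and it does follow from your setup because every lattice point of a (half-open) fundamental parallelepiped has last coordinate strictly less than $\sum_i q_i \leq (d+1)D$. Second, when a cone is half-open the parallelepiped must be shifted accordingly (some conditions $0 \leq \lambda_i < 1$ become $0 < \lambda_i \leq 1$), which you acknowledge only parenthetically; that is exactly where the bookkeeping lives. With those two points made precise, your argument is complete.
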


The polynomial (resp. quasi-polynomial) $i(P,t)$ is called the \textit{Ehrhart polynomial} of $P$ (resp. Ehrhart quasi-polynomial of $P$).

\subsection{The Littlewood-Richardson Coefficients}

We say that $\lambda = (\lambda_1, \dots, \lambda_k)$ is a \textit{partition} of a non-negative integer $m$ if $\lambda_1 \geq \cdots \geq \lambda_k$ are positive integers such that $\lambda_1 + \cdots + \lambda_k = m.$ For convenience, we will abuse the notation by allowing $\lambda_i$ to be zero. The positive numbers among $\lambda_1, \dots, \lambda_k$ are called \textit{parts} of $\lambda$. For example, $\lambda = (2,2,1,0)$ is a partition of $5$ with $3$ parts. We write $|\lambda|$ to denote $\lambda_1+ \cdots + \lambda_k.$

A \textit{hive} $\Delta_k$ of size $k$ is an array of vertices $h_{ij}$ arranged in a triangular grid consisting of $k^2$ small equilateral triangles as shown in Figure \ref{fig1}. Two adjacent equilateral triangles form a rhombus with two equal obtuse angles and two equal acute angles. There are three types of these rhombi: tilted to the right, left, and vertical as shown in Figure \ref{fig1}.

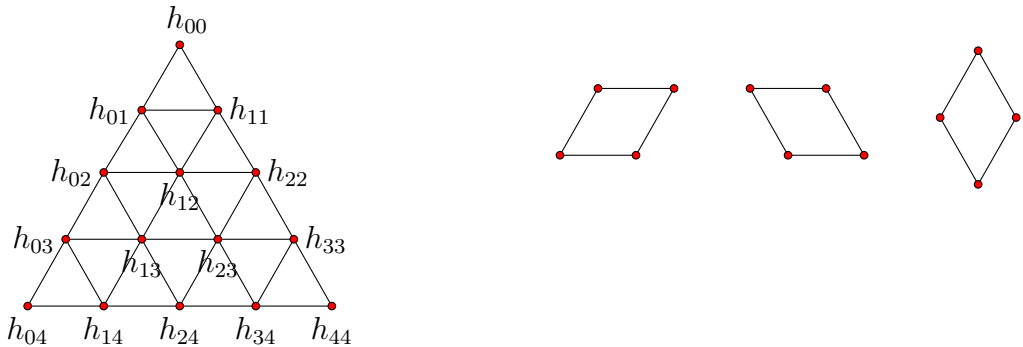
\begin{figure}[h]
  \centering
\begin{tikzpicture}
\draw (0,0) node[below] {$h_{04}$} -- (1,0) node[below] {$h_{14}$} -- (2,0) node[below] {$h_{24}$} -- (3,0) node[below] {$h_{34}$} -- (4,0) node[below] {$h_{44}$};
\draw (0.5,0.886) node[left] {$h_{03}$} -- (1.5, 0.886) node[below] {$h_{13}$} -- (2.5, 0.886) node[below] {$h_{23}$} -- (3.5,0.886) node[right] {$h_{33}$};
\draw (1,1.772) node[left] {$h_{02}$} -- (2, 1.772) node[below] {$h_{12}$} -- (3,1.772) node[right] {$h_{22}$};
\draw (1.5,2.598) node[left] {$h_{01}$} -- (2.5,2.598) node[right] {$h_{11}$};
\draw (0,0) -- (0.5,0.886) -- (1,1.772) -- (1.5,2.598) --(2,3.464);
\draw (1,0) -- (1.5,0.886) -- (2,1.772) -- (2.5,2.598);
\draw (2,0) -- (2.5,0.886) -- (3,1.772);
\draw (3,0) -- (3.5,0.886);
\draw (4,0) -- (3.5,0.886) -- (3,1.772) -- (2.5,2.598) --(2,3.464);
\draw (3,0) -- (2.5,0.886) -- (2,1.772) -- (1.5,2.598);
\draw (2,0) -- (1.5,0.886) -- (1,1.772);
\draw (1,0) -- (0.5,0.886);
\node at (2.1,3.8) {$h_{00}$};
\draw (7.5, 2.886) -- (7,2) -- (8,2) -- (8.5, 2.886) -- (7.5, 2.886); 
\draw[fill = red] (7.5, 2.886) circle (0.5mm);
\draw[fill = red] (7,2) circle (0.5mm);
\draw[fill = red] (8,2) circle (0.5mm);
\draw[fill = red] (8.5, 2.886) circle (0.5mm);

\draw (9.5, 2.886) -- (10.5, 2.886) -- (11,2) -- (10,2) -- (9.5, 2.886); 
\draw[fill = red] (9.5, 2.886) circle (0.5mm);
\draw[fill = red] (10.5, 2.886) circle (0.5mm);
\draw[fill = red] (10,2) circle (0.5mm);
\draw[fill = red] (11,2) circle (0.5mm);

\draw (12,2.5) -- (12.5,3.386) -- (13,2.5) -- (12.5,1.614) -- (12,2.5); 
\draw[fill = red] (12,2.5) circle (0.5mm);
\draw[fill = red] (12.5,3.386) circle (0.5mm);
\draw[fill = red] (13,2.5) circle (0.5mm);
\draw[fill = red] (12.5,1.614) circle (0.5mm);

\draw[fill = red] (0,0) circle (0.5mm);
\draw[fill = red] (1,0) circle (0.5mm);
\draw[fill = red] (2,0) circle (0.5mm);
\draw[fill = red] (3,0) circle (0.5mm);
\draw[fill = red] (4,0) circle (0.5mm);
\draw[fill = red] (0.5,0.886) circle (0.5mm);
\draw[fill = red] (1.5,0.886) circle (0.5mm);
\draw[fill = red] (2.5,0.886) circle (0.5mm);
\draw[fill = red] (3.5,0.886) circle (0.5mm);
\draw[fill = red] (1,1.772) circle (0.5mm);
\draw[fill = red] (2,1.772) circle (0.5mm);
\draw[fill = red] (3,1.772) circle (0.5mm);
\draw[fill = red] (1.5,2.598) circle (0.5mm);
\draw[fill = red] (2.5,2.598) circle (0.5mm);
\draw[fill = red] (2,3.464) circle (0.5mm);
\end{tikzpicture}
 \caption{Hive of size 4 (left), and the three types of rhombi in a hive (right)}
 \label{fig1}
\end{figure}

Let $\lambda = (\lambda_1, \dots, \lambda_k), \mu = (\mu_1, \dots, \mu_k), \nu = (\nu_1, \dots, \nu_k)$ be partitions with at most $k$ parts such that $|\nu| = |\lambda| + |\mu|.$ A \textit{hive of type} $(\nu, \lambda, \mu)$ is a labelling $(h_{ij})$ of $\Delta_k$ that satisfies the following \textit{hive conditions}.
\begin{enumerate}
    \item[(HC1)]\label{HC1} [Boundary condition] The labelings on the boundary are determined by $\lambda, \mu, \nu$ in the following ways.
    \begin{align*}
    h_{00} = 0, \ h_{j j} - h_{j-1 j-1} &= \nu_j, \ h_{0j} - h_{0j-1} = \lambda_j, &\text{ for } 1 \leq j \leq k.\\
        h_{ik} - h_{i-1k} &= \mu_i, &\text{ for } 1 \leq i \leq k.
    \end{align*}
    \item[(HC2)]\label{HC2} [Rhombi condition]  For every rhombus, the sum of the labels at obtuse vertices is greater than or equal to the sum of the labels at acute vertices. That is, for $1 \leq i < j \leq k,$
    \begin{align*}
    h_{ij} - h_{ij-1} &\geq h_{i-1 j} - h_{i-1 j-1},\\
    h_{ij} - h_{i-1 j} &\geq h_{i+1 j+1} - h_{ij+1}, \text{ and }\\
    h_{i-1 j} - h_{i-1 j-1} &\geq h_{ij+1} - h_{ij}.
    \end{align*}
\end{enumerate}

Let $H_k(\nu,\lambda,\mu)$ denote the set of all hive of type $(\nu, \lambda, \mu)$. Then the hive conditions (HC1) and (HC2) imply that $H_k(\nu,\lambda,\mu)$ is a rational polytope in $\br^{n}$ where $n = \binom{k+2}{2}$. Hence, we will call $H_k(\nu,\lambda,\mu)$ the \textit{hive polytope of type} $(\nu,\lambda,\mu)$. Knutson-Tao \cite{KnutsonTao1999} and Buch \cite{Buch2000} showed that 
\[c^{\nu}_{\lambda,\mu} = \text{ the number of integer points in } H_k(\nu,\lambda,\mu).\]

\begin{ex}
Let $k = 3$, $\nu = (4,3,1), \lambda = (2,1,0),$ and $\mu = (3,2,0)$, we have that $c^{\nu}_{\lambda,\mu} = 2$. The two corresponding integer points (integer labels) of $H_{3}(\nu, \lambda, \mu)$ are shown in Figure \ref{fig2}.

\begin{figure}[h]
  \centering
\begin{tikzpicture}
\node[left] at (0.5,0.886) {$3$};
\node[below] at (1.5, 0.886){$6$}; 
\node[below] at (2.5, 0.886) {$8$}; 
\node[right] at (3.5,0.886) {$8$};
\node[left] at (1,1.772) {$3$};
\node[below] at (2, 1.772) {$5$}; 
\node[right] at (3,1.772) {$7$};
\node[left] at (1.5,2.598) {$2$};
\node[right] at (2.5,2.598) {$4$};
\node at (2.1,3.8) {$0$};

\draw[fill = red] (0.5,0.886) circle (0.5mm);
\draw[fill = red] (1.5,0.886) circle (0.5mm);
\draw[fill = red] (2.5,0.886) circle (0.5mm);
\draw[fill = red] (3.5,0.886) circle (0.5mm);
\draw[fill = red] (1,1.772) circle (0.5mm);
\draw[fill = red] (2,1.772) circle (0.5mm);
\draw[fill = red] (3,1.772) circle (0.5mm);
\draw[fill = red] (1.5,2.598) circle (0.5mm);
\draw[fill = red] (2.5,2.598) circle (0.5mm);
\draw[fill = red] (2,3.464) circle (0.5mm);

\node[left] at (5.5,0.886) {$3$};
\node[below] at (6.5, 0.886){$6$}; 
\node[below] at (7.5, 0.886) {$8$}; 
\node[right] at (8.5,0.886) {$8$};
\node[left] at (6,1.772) {$3$};
\node[below] at (7, 1.772) {$6$}; 
\node[right] at (8,1.772) {$7$};
\node[left] at (6.5,2.598) {$2$};
\node[right] at (7.5,2.598) {$4$};
\node at (7.1,3.8) {$0$};

\node at (11,2.5) {$\nu = (4,3,1)$};
\node at (11,2) {$\lambda = (2,1,0)$};
\node at (11,1.5) {$\mu = (3,2,0)$};

\draw[fill = red] (5.5,0.886) circle (0.5mm);
\draw[fill = red] (6.5,0.886) circle (0.5mm);
\draw[fill = red] (7.5,0.886) circle (0.5mm);
\draw[fill = red] (8.5,0.886) circle (0.5mm);
\draw[fill = red] (6,1.772) circle (0.5mm);
\draw[fill = red] (7,1.772) circle (0.5mm);
\draw[fill = red] (8,1.772) circle (0.5mm);
\draw[fill = red] (6.5,2.598) circle (0.5mm);
\draw[fill = red] (7.5,2.598) circle (0.5mm);
\draw[fill = red] (7,3.464) circle (0.5mm);
\end{tikzpicture}
 \caption{The only two integer points (integer labels) of $H_3(\nu, \lambda, \mu)$}
 \label{fig2}
\end{figure}
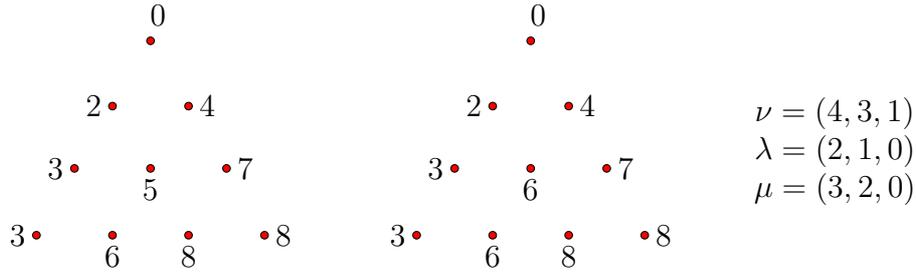
\end{ex} 
For fixed partitions $\lambda, \mu, \nu$ with at most $k$ parts such that $|\nu| = |\lambda| + |\mu|$, we define the the \textit{stretched} Littlewood-Richardson coefficient to be the function $c^{t\nu}_{t\lambda,t\mu}$ for non-negative integer $t.$ Because $H_k(t\nu,t\lambda,t\mu) = tH_k(\nu,\lambda,\mu)$, we have that
\[c^{t\nu}_{t\lambda,t\mu} = i(H_k(\nu,\lambda,\mu),t).\]

Examples provided in \cite{KingTolluTaumazet2003} indicate that $H_k(\nu,\lambda,\mu)$ is in general not an integral polytope. Thus, by Ehrhart theory (Theorem \ref{Ehrhar}), $c^{t\nu}_{t\lambda,t\mu}$ is a quasi-polynomial in $t$. We will show that $c^{t\nu}_{t\lambda,t\mu}$ is indeed a polynomial in $t$ even though the corresponding hive polytope $H_k(\nu,\lambda,\mu)$ is not integral.

\subsection{Kostant Partition Function and Steinberg's Formula} 

We will show the polynomiality of $c^{t\nu}_{t\lambda,t\mu}$ by using Steinberg's formula as derived in \cite{Rassart2004} by Rassart and the chamber complex of the Kostant partition function. To this end, we state the related notations and results for later reference.

Let $e_1, \dots, e_k$ be the standard basis vectors in $\br^k$, and let $\Delta_+ = \{e_i - e_j: 1 \leq i < j \leq k\}$ be the set of positive roots of the root system of type $A_{k-1}.$ We define $M$ to be the matrix whose columns consist of the elements of $\Delta_+$. The \textit{Kostant partition function} for the root system of type $A_{k-1}$ is the function $K: \bz^k \longrightarrow \bz_{\geq 0}$ defined by
\[K(v) = \Big\vert\left\{b \in \bz_{\geq 0}^{\binom{k}{2}}\,|\, Mb = v\right\}\Big\vert.\]
That is, $K(v)$ equals the number of ways to write $v$ as nonnegative integer linear combinations of the positive roots in $\Delta_+$. 

An important property of the matrix $M$, when written in the basis of simple roots $\{e_i - e_{i+1}\,|\, i = 1, \dots, k-1\}$, is that it is totally unimodular, i.e. the determinant of every square submatrix equals $-1, 0,$ or $1.$ Indeed, it is shown in \cite{Schrijver1986} that a matrix $A$ is totally unimodular if every column of $A$ only consists of 0’s and 1’s in a way that the 1’s come in a consecutive block. Let 
\[\mathrm{cone}(\Delta_+) = \left\{\sum \lambda_v v\,|\, v \in \Delta_+, \lambda_v \geq 0\right\}\]
be the cone spanned by the vectors in $\Delta_+.$ The \textit{chamber complex} is the polyhedral subdivision of $\mathrm{cone}(\Delta_+)$ that is obtained from the common refinement of cones $\mathrm{cone}(B)$ where $B$ are the maximum linearly independent subsets of $\Delta_+$. A maximum cell (a cone of maximum dimension) $\calC$ in the chamber complex is called a \textit{chamber}. Since $M$ is totally unimodular, the behavior of $K(v)$ is given by the following lemma as a special case of \cite[Theorem 1]{Sturmfels1995} due to Sturmfels.

\begin{lem}\label{Sturmfels}
    Let $\calC$ be a chamber in the chamber complex of $\mathrm{cone}(\Delta_+)$. Then the Kostant partition function $K(v)$ is a polynomial in $v = (v_1, \dots, v_k)$ on $\calC$ of degree at most $\binom{k-1}{2}$.
\end{lem}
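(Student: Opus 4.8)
The plan is to prove Lemma \ref{Sturmfels} by invoking the general theorem of Sturmfels on vector partition functions and then leveraging the total unimodularity of $M$ to upgrade the conclusion from ``quasi-polynomial'' to ``polynomial,'' while pinning down the degree bound. Sturmfels' result states that for a fixed matrix $M$ with integer columns spanning the relevant cone, the vector partition function $v \mapsto K(v)$ agrees with a quasi-polynomial on each chamber $\calC$ of the chamber complex, and that the degree of this quasi-polynomial is at most (number of columns) $-$ (dimension of the ambient lattice spanned). So the first step is simply to recall Sturmfels' theorem and apply it to our specific $M$, whose columns are the positive roots $\Delta_+ = \{e_i - e_j : 1 \le i < j \le k\}$.

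\medskip

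\emph{First} I would count dimensions and columns. The matrix $M$ has $\binom{k}{2}$ columns (one per positive root). The positive roots of type $A_{k-1}$ span the hyperplane $\{x \in \br^k : x_1 + \cdots + x_k = 0\}$, which has dimension $k-1$. By Sturmfels' degree bound, on each chamber $K(v)$ is a quasi-polynomial of degree at most
\[
\binom{k}{2} - (k-1) = \frac{k(k-1)}{2} - (k-1) = \frac{(k-1)(k-2)}{2} = \binom{k-1}{2},
\]
which gives exactly the claimed degree bound. \emph{Second}, and this is the crucial upgrade, I would argue that the quasi-polynomial is in fact an honest polynomial, i.e.\ has period one. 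This is where the total unimodularity of $M$ enters: the paper has already established (citing \cite{Schrijver1986}) that $M$, expressed in the simple-root basis, is totally unimodular. The period of the partition-function quasi-polynomial on a chamber is controlled by the denominators of the vertices of the fibers $\{b \ge 0 : Mb = v\}$, equivalently by the absolute values of the relevant maximal minors of $M$; when $M$ is totally unimodular every such minor is $0$ or $\pm 1$, so all these vertices are integral and the period collapses to $1$. Hence $K(v)$ restricted to $\calC$ is a genuine polynomial in $v = (v_1, \dots, v_k)$.

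\medskip

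\emph{The main obstacle} I anticipate is stating the period-one (integrality) conclusion cleanly rather than proving the underlying theorem from scratch. One must be careful that Sturmfels' theorem as usually stated gives a quasi-polynomial whose period divides a quantity built from the minors of $M$; the honest work is to connect ``totally unimodular'' to ``period equals one.'' The cleanest route is to note that total unimodularity of $M$ forces every vertex of every fiber polytope $\{b \ge 0 : Mb = v\}$ (for integral $v$) to be integral, so that the chamber-wise Ehrhart-type quasi-polynomial counting lattice points in these fibers has period dividing $1$. Equivalently, one can cite the sharper form of Sturmfels' result (or Blakley's / Dahmen--Micchelli's analysis of box splines and partition functions) which asserts that for unimodular configurations the partition function is piecewise polynomial. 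I would present this as a short paragraph: quote Sturmfels for piecewise quasi-polynomiality and the degree bound, then quote total unimodularity to force period one, and finally record the degree computation above. No lengthy calculation is needed beyond the binomial identity already displayed.
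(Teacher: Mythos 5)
Your proposal is correct and takes essentially the same route as the paper: the paper derives this lemma precisely by citing Sturmfels' theorem on vector partition functions together with the total unimodularity of $M$ in the simple-root basis (established just before the lemma via \cite{Schrijver1986}), and your degree count $\binom{k}{2} - (k-1) = \binom{k-1}{2}$ is exactly the implicit computation. The paper states the lemma without spelling out these details, so your write-up is, if anything, a more complete account of the same argument.
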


Steinberg's formula \cite{Steinberg1961} expresses the tensor product of two irreducible representations of semisimple Lie algebras as the direct sum of other irreducible representations. When restricting the formula to $\mathrm{SL}_k\bc$, we obtain the following version of Steinberg's formula for computing $c^{\nu}_{\lambda,\mu}$.

\begin{thm}[Steinberg's Formula]\label{Steinberg} Let $\mu, \lambda, \nu$ be partitions with at most $k$ part such that $|\nu| = |\lambda| + |\mu|.$ Then
\[c^{\nu}_{\lambda,\mu} = \sum_{\sigma,\tau \in \calS_k}(-1)^{\mathrm{inv}(\sigma\tau)}K(\sigma(\lambda +\delta) + \tau(\mu + \delta) - (\nu + 2\delta))\]
where $\mathrm{inv}(\psi)$ is the number of inversions of the permutation $\psi$ and 
\[\delta = \frac{1}{2}\sum_{1 \leq i < j \leq k}(e_i - e_j) = \frac{1}{2}(k-1, k-3, \dots, -(k-3), -(k-1))\]
is the Weyl vector for type $A_{k-1}.$
\end{thm}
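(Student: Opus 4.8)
The plan is to recognize Theorem~\ref{Steinberg} as the specialization to the root system of type $A_{k-1}$ of the classical Steinberg multiplicity formula, and to derive it from two standard representation-theoretic inputs: the Weyl character formula and the expansion of the inverse Weyl denominator as the generating function of the Kostant partition function. For type $A_{k-1}$ the Weyl group is $\calS_k$, the sign character is $\epsilon(\sigma)=(-1)^{\mathrm{inv}(\sigma)}$, and half the sum of positive roots is exactly $\delta$. I would work formally in the group ring of the weight lattice, writing $e^\beta$ for the basis element attached to a weight $\beta$ and $\mathrm{ch}(V_\lambda)=\sum_\beta m_\lambda(\beta)e^\beta$ for the character of the irreducible representation $V_\lambda$ of highest weight $\lambda$. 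Recalling the standard identification of Schur functions with irreducible characters, $C^{\nu}_{\lambda,\mu}$ is the multiplicity of $V_\nu$ in $V_\lambda\otimes V_\mu$, so that $\mathrm{ch}(V_\lambda)\,\mathrm{ch}(V_\mu)=\sum_\nu C^{\nu}_{\lambda,\mu}\,\mathrm{ch}(V_\nu)$. The hypothesis $|\nu|=|\lambda|+|\mu|$ guarantees that every vector $\sigma(\lambda+\delta)+\tau(\mu+\delta)-(\nu+2\delta)$ has coordinate sum zero, hence lies in the root lattice where $K$ is supported.

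The two ingredients I would take as known are as follows. First, the Weyl character formula, used in the factored form
\[
\mathrm{ch}(V_\lambda)=\frac{\sum_{\sigma\in\calS_k}\epsilon(\sigma)e^{\sigma(\lambda+\delta)}}{D},\qquad D=\sum_{w\in\calS_k}\epsilon(w)e^{w\delta}=e^{\delta}\prod_{\alpha\in\Delta_+}\bigl(1-e^{-\alpha}\bigr).
\]
Second, expanding each factor of $D$ as a geometric series gives the Kostant generating identity
\[
\frac{1}{\prod_{\alpha\in\Delta_+}(1-e^{-\alpha})}=\prod_{\alpha\in\Delta_+}\sum_{n\ge 0}e^{-n\alpha}=\sum_{\beta}K(\beta)\,e^{-\beta},
\]
which is exactly the definition of $K(\beta)$ as the number of ways to write $\beta$ as a nonnegative integer combination of the positive roots. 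Combining the two displays yields $\mathrm{ch}(V_\lambda)=\sum_{\sigma}\sum_{\beta}\epsilon(\sigma)K(\beta)\,e^{\sigma(\lambda+\delta)-\delta-\beta}$, i.e.\ Kostant's weight-multiplicity formula.

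The main computation would then multiply the identity $\mathrm{ch}(V_\lambda)\,\mathrm{ch}(V_\mu)=\sum_\nu C^{\nu}_{\lambda,\mu}\,\mathrm{ch}(V_\nu)$ through by $D$. On the right, $\mathrm{ch}(V_\nu)\,D=\sum_{\sigma}\epsilon(\sigma)e^{\sigma(\nu+\delta)}$, so the right side becomes $\sum_\nu C^{\nu}_{\lambda,\mu}\sum_\sigma\epsilon(\sigma)e^{\sigma(\nu+\delta)}$. On the left, I would combine $\mathrm{ch}(V_\mu)\,D=\sum_{\tau}\epsilon(\tau)e^{\tau(\mu+\delta)}$ with the Kostant expansion of $\mathrm{ch}(V_\lambda)$ to get
\[
\Bigl(\sum_{\sigma}\sum_{\beta}\epsilon(\sigma)K(\beta)\,e^{\sigma(\lambda+\delta)-\delta-\beta}\Bigr)\Bigl(\sum_{\tau}\epsilon(\tau)e^{\tau(\mu+\delta)}\Bigr).
\]
Extracting the coefficient of $e^{\nu+\delta}$ on the left forces $\beta=\sigma(\lambda+\delta)+\tau(\mu+\delta)-(\nu+2\delta)$, producing $\sum_{\sigma,\tau}\epsilon(\sigma)\epsilon(\tau)\,K\bigl(\sigma(\lambda+\delta)+\tau(\mu+\delta)-(\nu+2\delta)\bigr)$; since $\epsilon$ is a homomorphism, $\epsilon(\sigma)\epsilon(\tau)=\epsilon(\sigma\tau)=(-1)^{\mathrm{inv}(\sigma\tau)}$, which is precisely the claimed right-hand side.

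The step I expect to be the crux is the coefficient extraction on the right-hand side: I must show that $e^{\nu+\delta}$ occurs there with coefficient exactly $C^{\nu}_{\lambda,\mu}$. This rests on the regularity of $\nu+\delta$ — because $\delta$ is strictly dominant, $\nu+\delta$ is a regular weight, so its $\calS_k$-orbit meets the dominant cone in a single point with trivial stabilizer; hence among all terms $\epsilon(\sigma)e^{\sigma(\nu'+\delta)}$ the exponent $\nu+\delta$ is produced only by $\sigma=\mathrm{id}$ and $\nu'=\nu$. A secondary technical point is that the inverse Weyl denominator is an infinite series, so the formal manipulations should be carried out in the completion of the group ring supported on weights bounded above in the dominance order, where all products are well defined; this is routine but must be stated. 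Once regularity pins down the right-hand coefficient, equating it with the left-hand coefficient computed above finishes the proof.
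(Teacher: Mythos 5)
The paper does not actually prove Theorem~\ref{Steinberg}: it quotes the formula as known, citing Steinberg's original paper and Section~1.1 of Rassart's paper for the derivation. Your argument is correct and is precisely that standard derivation --- Weyl character formula for type $A_{k-1}$, the Kostant expansion of the inverse Weyl denominator, multiplication of $\mathrm{ch}(V_\lambda)\,\mathrm{ch}(V_\mu)=\sum_\nu C^\nu_{\lambda,\mu}\,\mathrm{ch}(V_\nu)$ by the denominator, and coefficient extraction at $e^{\nu+\delta}$ justified by the regularity of $\nu+\delta$ --- so it takes essentially the same route as the source the paper defers to, correctly handling the two technical points (working in a completion of the group ring, and the trivial stabilizer of the regular weight) on which the extraction step rests.
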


Details of the derivation can be found in \cite[section 1.1]{Rassart2004}. 

\section{Proof of the Polynomiality}

We are now ready to prove Theorem \ref{main}.

\begin{proof}[Proof of Theorem \ref{main}] The hive conditions imply that $c^{t\nu}_{t\lambda, t\mu}$ is a quasi-polynomial in $t.$ To see that $c^{t\nu}_{t\lambda, t\mu}$ is in fact a polynomial in $t$, it suffices to show that there exists an integer $N$ such that $c^{t\nu}_{t\lambda, t\mu}$ is a polynomial in $t$ for $t \geq N.$

For $\sigma, \tau \in \calS_k$, let 
\begin{align*}
    r^{\lambda,\mu,\nu}_{\sigma,\tau}(t) &:= \sigma(t\lambda + \delta) + \tau(t\mu+\delta) - (t\nu +2\delta)\\
    &= t(\sigma(\lambda) +\tau(\mu) - \nu) + \sigma(\delta)+\tau(\delta)-2\delta.
\end{align*}
Then $r^{\lambda,\mu,\nu}_{\sigma,\tau}(t)$ is a ray (when allowing $t$ to be non-negative real number) emanating from $\sigma(\delta)+\tau(\delta)-2\delta$ in the direction of $\sigma(\lambda) +\tau(\mu) - \nu$.

By Steinberg's formula, 
\[c^{t\nu}_{t\lambda, t\mu} = \sum_{\sigma, \tau \in \calS_k}(-1)^{\mathrm{inv}(\sigma\tau)}K(r^{\lambda,\mu,\nu}_{\sigma,\tau}(t)).\]

Lemma \ref{Sturmfels} states that $K(v)$ is a polynomial in $v$ when $v$ stays in one particular cone (chamber) of the chamber complex of $\mathrm{cone}(\Delta_+)$. Because there are only finitely many cones in the chamber complex, we have that for every pair $\sigma, \tau \in \calS_k$ there exists an integer $N^{\lambda,\mu,\nu}_{\sigma,\tau}$ such that exactly one of the following happens:
\begin{enumerate}
\item[(1)] The ray $r^{\lambda,\mu,\nu}_{\sigma,\tau}(t)$ lies in one particular cone of the chamber complex for all $t \geq N^{\lambda,\mu,\nu}_{\sigma,\tau}$
\item[(2)] The ray $r^{\lambda,\mu,\nu}_{\sigma,\tau}(t)$ lies outside $\mathrm{cone}(\Delta_+)$ for all $t \geq N^{\lambda,\mu,\nu}_{\sigma,\tau}.$
\end{enumerate}
If (1) is satisfied, then $K(r^{\lambda,\mu,\nu}_{\sigma,\tau}(t))$ is a polynomial in $t$ for $t \geq  N^{\lambda,\mu,\nu}_{\sigma,\tau}$. If (2) is satisfied, then $K(r^{\lambda,\mu,\nu}_{\sigma,\tau}(t))$ is the zero polynomial for $t \geq  N^{\lambda,\mu,\nu}_{\sigma,\tau}.$ In either case, $K(r^{\lambda,\mu,\nu}_{\sigma,\tau}(t))$ is a polynomial in $t$ for $t \geq  N^{\lambda,\mu,\nu}_{\sigma,\tau}$.  Now let
\[N = \max_{\sigma, \tau \in \calS_k}\{ N^{\lambda,\mu,\nu}_{\sigma,\tau}\}.\]
Then Steinberg's formula implies that $c^{t\nu}_{t\lambda, t\mu}$ is a polynomial in $t$ for $t \geq N.$ Therefore, $c^{t\nu}_{t\lambda, t\mu}$ is a polynomial in $t$. 

By Lemma \ref{Sturmfels}, each polynomial piece of $K(v)$ has degree at most $\binom{k-1}{2}$. Thus, for every $\sigma, \tau$, we have that $K(r^{\lambda,\mu,\nu}_{\sigma,\tau}(t))$ is a polynomial in $t$ of degree at most $\binom{k-1}{2}$ for $t \geq  N^{\lambda,\mu,\nu}_{\sigma,\tau}.$ Hence, $c^{t\nu}_{t\lambda, t\mu}$ is a polynomial in $t$ of degree at most $\binom{k-1}{2}$. 
\end{proof}

In the proof of Theorem \ref{main}, we showed that every $K(r^{\lambda,\mu,\nu}_{\sigma,\tau}(t))$ is eventually either the zero polynomial or a non-zero polynomial in $t$. Proposition \ref{nonzero} gives a characterization of those $K(r^{\lambda,\mu,\nu}_{\sigma,\tau}(t))$ that eventually become non-zero polynomials. The proof uses the following characterization of non-zero $K(v)$.

\begin{lem}\label{characterization}
Let $v = (v_1, \dots, v_k)$ be a vector in $\bz^k$ with $v_1 + \cdots + v_k = 0$. Then $K(v)$ is non-zero if and only if $v_1 + \cdots + v_i \geq 0$ for all $i = 1, \dots, k$.
\end{lem}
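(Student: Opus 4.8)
The plan is to prove the two implications separately, exploiting the explicit relationship between the positive roots and the simple roots $e_m - e_{m+1}$. Throughout I write $p_\ell := v_1 + \cdots + v_\ell$ for the $\ell$-th prefix sum, so that $p_0 = 0$ and, by the standing hypothesis $v_1 + \cdots + v_k = 0$, also $p_k = 0$.

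For the necessity direction, suppose $K(v) \neq 0$, so that $v = \sum_{1 \le i < j \le k} b_{ij}(e_i - e_j)$ for some $b_{ij} \in \bz_{\ge 0}$. The key observation is that a single positive root $e_i - e_j$ has all of its prefix sums in $\{0,1\}$: its $\ell$-th prefix sum equals $1$ when $i \le \ell < j$ and $0$ otherwise. Summing these contributions with the nonnegative coefficients $b_{ij}$ yields $p_\ell \ge 0$ for every $\ell$, which is exactly the asserted inequality.

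For the sufficiency direction I would argue constructively rather than by a greedy descent, since the telescoping identity $e_i - e_j = \sum_{m=i}^{j-1}(e_m - e_{m+1})$ shows that the monoid generated over $\bz_{\ge 0}$ by $\Delta_+$ coincides with the one generated by the simple roots. Writing $v = \sum_{m=1}^{k-1} c_m(e_m - e_{m+1})$ and matching coordinates forces $c_m = p_m$ for each $m$: coordinate $\ell$ of the right-hand side equals $c_\ell - c_{\ell-1}$, which must equal $p_\ell - p_{\ell-1} = v_\ell$, while the last coordinate is $-c_{k-1} = -p_{k-1} = v_k$ because $p_k = 0$. Hence $v = \sum_{m=1}^{k-1} p_m\,(e_m - e_{m+1})$ is an explicit expression of $v$ in terms of positive roots, and when the coefficients $p_m$ are nonnegative integers this is a legitimate witness, giving $K(v) \ge 1$.

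The only point that genuinely needs care — and the one I would flag as the main obstacle — is integrality. The coefficients $p_m$ are nonnegative by hypothesis, but they must also be integers to be counted by $K$; this holds precisely when $v \in \bz^k$. Since $Mb \in \bz^k$ for every $b$, the function $K$ vanishes on all non-integral $v$ in any case, so the characterization is to be read with $v \in \bz^k$, and I would state the sufficiency step in that regime. For an integer vector each $p_m$ is an integer, so the inequalities $p_m \ge 0$ upgrade to $p_m \in \bz_{\ge 0}$, and the explicit combination above completes the proof.
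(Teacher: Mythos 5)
Your proof is correct, but there is nothing in the paper to compare it against line by line: the paper does not prove Lemma \ref{characterization} at all --- it quotes the characterization as a known result of Stanley \cite{Stanley2000}. Your argument supplies the missing elementary proof, and both directions are sound. For necessity, the observation that each positive root $e_i - e_j$ has $\ell$-th prefix sum equal to $1$ for $i \le \ell < j$ and $0$ otherwise immediately gives $p_\ell \ge 0$ for any nonnegative combination. For sufficiency, the explicit witness $v = \sum_{m=1}^{k-1} p_m (e_m - e_{m+1})$ is legitimate because the simple roots themselves lie in $\Delta_+$, and your coordinate check that the coefficients are forced to be the prefix sums is right; the telescoping remark about the two monoids coinciding is a nice motivation but is not even needed for this direction. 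Your integrality caveat is a genuine correction rather than pedantry: as literally stated over $\br^k$, the ``if'' direction is false (for $k = 2$ the vector $v = (\tfrac12, -\tfrac12)$ has nonnegative prefix sums, yet $K(v) = 0$ since $Mb$ is always integral), so the lemma must be read for $v \in \bz^k$. This matches the paper's own definition of $K$ as a function on $\bz^k$ and costs nothing downstream, since the vectors $r^{\lambda,\mu,\nu}_{\sigma,\tau}(t)$ to which the lemma is applied in Proposition \ref{nonzero} are integer vectors for integer $t$. In short, your proposal both fills in a proof the paper delegates to a citation and repairs a small imprecision in the statement.
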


\begin{proof}
    Let $M^*$ be the matrix $M$ written using the simple roots $e_1 - e_{2}, \dots, e_{k-1} - e_k$ as a basis. Then, the entries of $M^*$ are only $0$ and $1$. Moreover, because the simple roots themselves are columns of $M$, we have that the identity matrix is a submatrix of $M^*$. Similarly, let $v^*$ be the vector $v$ written using the simple roots as a basis. Then, $v^* = (v_1, v_1+v_2, \dots, v_1 + \cdots v_{k-1})$. The desired result is obtained by observing that 
    \[K(v) = \Big\vert\left\{b \in \bz_{\geq 0}^{\binom{k}{2}}\,|\, M^*b = v^*\right\}\Big\vert.\]
\end{proof}

\begin{prop}\label{nonzero}
Let $\mu, \lambda, \nu$ be partitions with at most $k$ part such that $|\nu| = |\lambda| + |\mu|.$ For $\sigma, \tau \in \calS_k,$ let 
\[r^{\lambda,\mu,\nu}_{\sigma,\tau}(t) = t\beta + \gamma\]
where $\beta = \sigma(\lambda) +\tau(\mu) - \nu$ and $\gamma = \sigma(\delta)+\tau(\mu)-2\delta.$
Then there exists an integer $N^{\lambda,\mu,\nu}_{\sigma,\tau}$ such that  $K(r^{\lambda,\mu,\nu}_{\sigma,\tau}(t))$ is a non-zero polynomial in $t$ for $t \geq N^{\lambda,\mu,\nu}_{\sigma,\tau}$ if and only if for all $i = 1, \dots, k$ we have that
\begin{enumerate}
    \item[(1)] $\beta_1 + \beta_2 + \cdots + \beta_i$ is positive, or
    \item[(2)] $\beta_1 + \beta_2 + \cdots + \beta_i$ is zero and $\gamma_1 + \gamma_2 + \cdots + \gamma_i$ is non-negative. 
\end{enumerate} 
\end{prop}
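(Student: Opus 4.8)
The plan is to apply Lemma~\ref{characterization} to the vector $r^{\lambda,\mu,\nu}_{\sigma,\tau}(t) = t\beta + \gamma$, but this requires first checking the hypothesis of that lemma, namely that the coordinates of $r^{\lambda,\mu,\nu}_{\sigma,\tau}(t)$ sum to zero. I would verify this by noting that each of $\lambda$, $\mu$, $\nu$, and $\delta$ has a fixed coordinate sum that is preserved under the permutation action of $\calS_k$ (since permuting coordinates does not change their sum), and that the identity $|\nu| = |\lambda| + |\mu|$ forces $\sum_i \beta_i = |\lambda| + |\mu| - |\nu| = 0$ and $\sum_i \gamma_i = |\delta| + |\mu| - 2|\delta|$; a short computation using $|\delta|=0$ (the entries of $\delta$ are symmetric about zero) shows $\sum_i \gamma_i = |\mu|$. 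Here I would need to be slightly careful: the coordinate sum of $\gamma$ need not be zero, so Lemma~\ref{characterization} applies directly to $t\beta + \gamma$ only once we confirm the whole vector sums to zero. Since $\sum_i (t\beta_i + \gamma_i) = t\cdot 0 + |\mu| = |\mu|$, which is not zero in general, I would first need to reconcile this. In fact the correct normalization comes from the $-2\delta$ and $\tau(\mu)$ terms: rechecking against the definition $\gamma = \sigma(\delta) + \tau(\mu) - 2\delta$ and that Steinberg's formula evaluates $K$ on vectors in the root lattice (coordinate sum zero), the argument of $K$ must have zero coordinate sum, so I would confirm $\sum_i \gamma_i = 0$ directly from $|\mu|=|\nu|-|\lambda|$ and the structure of the formula rather than from $\gamma$ in isolation.

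Once the zero-sum hypothesis is secured, the core of the argument is a comparison of two conditions. By Lemma~\ref{characterization}, $K(r^{\lambda,\mu,\nu}_{\sigma,\tau}(t))$ is non-zero precisely when the partial sums
\[
S_i(t) := \sum_{j=1}^{i}(t\beta_j + \gamma_j) = t\,(\beta_1 + \cdots + \beta_i) + (\gamma_1 + \cdots + \gamma_i)
\]
are non-negative for every $i = 1, \dots, k$. The proposition asks instead for conditions under which $K(r^{\lambda,\mu,\nu}_{\sigma,\tau}(t))$ is \emph{eventually} a non-zero polynomial, i.e. non-zero for all sufficiently large $t$. The plan is to show that ``$S_i(t) \geq 0$ for all large $t$ and all $i$'' is equivalent to the disjunction (1)--(2) stated in the proposition, handled index-by-index. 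For a single $i$, writing $B_i = \beta_1 + \cdots + \beta_i$ and $G_i = \gamma_1 + \cdots + \gamma_i$, the quantity $S_i(t) = tB_i + G_i$ is eventually non-negative iff either $B_i > 0$ (the linear term dominates and drives $S_i(t) \to +\infty$), or $B_i = 0$ and $G_i \geq 0$ (the function is the constant $G_i$), while if $B_i < 0$ then $S_i(t) \to -\infty$ and fails for large $t$. This is exactly the case analysis giving (1) and (2).

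The remaining subtlety is the logical interplay between ``non-zero'' and ``non-zero polynomial''. Having established via Lemma~\ref{Sturmfels} (as in the proof of Theorem~\ref{main}) that $K(r^{\lambda,\mu,\nu}_{\sigma,\tau}(t))$ agrees with a single polynomial $p(t)$ for all $t \geq N^{\lambda,\mu,\nu}_{\sigma,\tau}$, I would argue that $p$ is the \emph{non-zero} polynomial exactly when $K$ takes a non-zero value for large $t$: a polynomial has only finitely many roots, so if $p \not\equiv 0$ then $p(t) > 0$ for all large $t$, whereas if $p \equiv 0$ then $K$ vanishes for all large $t$. Thus ``eventually non-zero polynomial'' coincides with ``non-zero for all large $t$'', and combining this with Lemma~\ref{characterization} and the per-index analysis above completes the equivalence. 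The main obstacle I anticipate is purely bookkeeping: correctly pinning down the coordinate sums of $\beta$ and $\gamma$ so that Lemma~\ref{characterization} legitimately applies to the argument of $K$, since a sign error or a miscounted $|\delta|$ would invalidate the whole reduction; the inequality comparison itself is then routine.
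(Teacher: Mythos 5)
Your proposal is correct and takes essentially the same route as the paper: apply Stanley's characterization (Lemma~\ref{characterization}) together with the eventual polynomiality of $K(r^{\lambda,\mu,\nu}_{\sigma,\tau}(t))$ established in the proof of Theorem~\ref{main}, and observe that the linear function $t(\beta_1+\cdots+\beta_i)+(\gamma_1+\cdots+\gamma_i)$ is non-negative for all large $t$ exactly under conditions (1)--(2). Your concern about the zero-sum hypothesis is well-founded and you resolved it the right way: the printed $\gamma = \sigma(\delta)+\tau(\mu)-2\delta$ is a typo for $\sigma(\delta)+\tau(\delta)-2\delta$ (as one sees by expanding $\sigma(t\lambda+\delta)+\tau(t\mu+\delta)-(t\nu+2\delta)$), with which $\sum_i \gamma_i = 0$ so that Lemma~\ref{characterization} legitimately applies --- a point the paper's own proof passes over in silence.
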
 

\begin{proof}

Let $r^{\lambda,\mu,\nu}_{\sigma,\tau}(t) = (r_1(t), \dots, r_k(t))$. Then $r_i(t) = t\beta_i + \gamma_i$. In the proof of Theorem \ref{main}, we showed that there exists a positive integer $N^{\lambda,\mu,\nu}_{\sigma,\tau}$ such that  $K(r^{\lambda,\mu,\nu}_{\sigma,\tau}(t))$ is a polynomial in $t$ for $t \geq N^{\lambda,\mu,\nu}_{\sigma,\tau}.$ For every $i = 1, \dots, k$, the partial sum $r_1(t) + \cdots + r_i(t)$ is non-negative for all $t \geq N^{\lambda,\mu,\nu}_{\sigma,\tau}$ precisely when one of the two conditions meets for all $i = 1, \dots, k$. Thus, by Lemma \ref{characterization}, $K(r^{\lambda,\mu,\nu}_{\sigma,\tau}(t))$ is a non-zero polynomial for $t \geq N^{\lambda,\mu,\nu}_{\sigma,\tau}.$
\end{proof}

\newpage

\section*{Acknowledgement}
I am grateful to Fu Liu. Her careful review and thoughtful comments significantly improved the exposition of this paper. I also would like to thank UC Davis's College of Letter and Science for providing the Dean’s Summer Graduate Fellowship to support me during the summer of 2022.

\section*{Data Availability Statement}
Data sharing not applicable to this article as no datasets were generated or analysed during the current study.

\section*{Statements and Declarations}
\noindent\textbf{Competing Interests:} The author declares that there is no conflict of interest.

\bibliographystyle{abbrv}
\bibliography{BibContainer}

\end{document}